\documentclass[a4paper,11pt]{article}

\usepackage[utf8]{inputenc}
\usepackage[english]{babel}
\usepackage{amsmath}
\usepackage{amsthm}
\usepackage{amsfonts}
\usepackage{amssymb}
\usepackage{caption}
\usepackage{hyperref} 
\usepackage[english]{cleveref} %vuole prima il pacchetto hyperref 
\usepackage{dsfont}
\usepackage{emptypage}
\usepackage[shortlabels]{enumitem}
\usepackage{extpfeil}
\usepackage{faktor}
\usepackage{fancyhdr}
\usepackage{float}
\usepackage{graphicx}
\usepackage{indentfirst}
\usepackage{mathdots}
\usepackage{mathtools}
\usepackage{mathrsfs}
\usepackage{multirow}
\usepackage{rotating}
\usepackage{stackrel}
\usepackage{stmaryrd}
\usepackage{subcaption}
\usepackage{tikz}
\usepackage{todonotes}
\usepackage{verbatim}

\usetikzlibrary{patterns,arrows,calc,matrix,decorations.pathreplacing}

\newtheorem{thm}{Theorem}[section]

\newtheorem{prop}[thm]{Proposition}

\newtheorem{lemma}[thm]{Lemma}
\theoremstyle{definition}
\newtheorem{definition}[thm]{Definition}
\newtheorem{example}[thm]{Example}

\theoremstyle{remark}
\newtheorem{remark}[thm]{Remark}

\DeclareMathOperator{\rk}{rk}

\def\N{\mathbb{N}}
\def\Z{\mathbb{Z}}
\def\Q{\mathbb{Q}}

\def\ux{\underline{x}_i}
\def\uy{\underline{y}^i}
\def\uf{\underline{f}}
\def\uz{\underline{z}}
\newcommand{\defeq}{\stackrel{\textnormal{def}}{=}}

\DeclareMathOperator{\sgn}{sgn}

\begin{document}
\title{Orientable arithmetic matroids}
\author{Roberto Pagaria}
\maketitle

\begin{abstract}
The theory of matroids has been generalized to oriented matroids and, recently, to arithmetic matroids.
We want to give a definition of ``oriented arithmetic matroid'' and prove some properties like the ``uniqueness of orientation''.
\end{abstract}

%\section{Oriented arithmetic matroids}

The aim of this paper is to relate two different generalizations of matroids: the oriented matroids and the arithmetic matroids.

Oriented matroids have a large use in mathematics and science;
they are related to the simplex method for linear programming, to the chirality of molecules in theoretical chemistry, and to knot theory.
For instance, the Jones polynomial of a link is a specialization of the signed Tutte polynomial (see \cite{Kauffman}) of an oriented graphic matroid \cite{Thistlethwaite, Jaeger}.
Another interesting fact is the correspondence between oriented matroids and arrangements of pseudospheres \cite{Folkman-Lawrence} that generalizes the correspondence between realizable matroids and central hyperplane arrangements.

Arithmetic matroids appear as the combinatorial object for the cohomology module of the complement of a toric arrangement \cite{DeConciniProcesiToricArr,Moci,CDDMP}.
The study of toric arrangements is related to zonotopes, partition functions, box splines, and Dahmen-Micchelli spaces (see \cite{DeConciniProcesiVergne,DeConciniProcesiBook,Moci}).
The obvious correspondence between realizable arithmetic matroids and central toric arrangements has not been generalized to the non-realizable cases, so far.

With the aim of filling this gap, we define a class of well-behaviour arithmetic matroids which we call \textit{orientable arithmetic matroids} (see \Cref{def:oam}) %with the hope
hoping that these correspond to ``arrangements of pseudo-tori''.

\medskip
An $r \times n$ matrix with integer coefficients describes at the same time a central toric arrangement, an oriented matroid, and an arithmetic matroid.
It comes natural to say that two matrices are equivalent if they describe the same toric arrangement.
Geometrically, the group $\operatorname{GL}_r(\Z)\times (\Z/2\Z)^n$ acts on the space $\operatorname{M}(r,n;\Z)$ by left multiplication and sign reverse of the columns.
Two realizations (i.e. matrices) of the arithmetic matroid are equivalent if and only if they belong to the same $\operatorname{GL}_r(\Z)\times (\Z/2\Z)^n$-orbit.

The space $\operatorname{M}(r,n;\Z)$ is included in $\operatorname{M}(r,n;\Q)$ and the action of $\operatorname{GL}_r (\Z)\times (\Z/2\Z)^n$ extends naturally to the one of $\operatorname{GL}_r (\Q)\times (\Z/2\Z)^n$.
Theorem 3.12 in \cite{Pagaria17} shows that all representations of an arithmetic matroid belong to the same $\operatorname{GL}_r (\Q)\times (\Z/2\Z)^n$-orbit.
By this fact, it can be easily deduced that representable arithmetic matroids have a unique orientation.
We extend this result to the non-representable case, showing (\Cref{thm:uniqueness_orientation}) that orientable arithmetic matroids have an unique orientation (up to re-orientation).

We start by recalling some standard definitions and giving the compatibility condition, eq.~\eqref{cond:GP},  between the orientation and the multiplicity function of an oriented arithmetic matroid.
The condition \eqref{cond:GP} coincides with the Pl\" uker relation for the Grassmannian.
We prove that oriented arithmetic matroids are closed under deletion, contraction, and duality.
Next, we show that the condition \eqref{cond:GP} implies %formula \eqref{eq:Leibniz_formula}, 
a generalization of the Leibniz rule for the determinant.
We state and prove a result about the uniqueness of orientation so that it makes sense to speak of orientable arithmetic matroids instead of oriented arithmetic matroids.
Finally, we show that orientable arithmetic matroids, upon forgetting the multiplicity function, are realizable matroids (see \Cref{prop:strongGCD+orient_realiz}).
Moreover, we state the condition ``strong GCD'' (see \Cref{def:strongGCD}) implying the realizability of orientable arithmetic matroids. 

All the discussion can be generalize to quasi-arithmetic matroids and so to matroids over $\Z$ (see \cite{FM16})

\section{Definitions}
Let $E$ be a finite totally ordered set.
We will frequently make use of $r$-tuples of elements of $E$, so with an abuse of notation for any $A= \{ a_1, \dots, a_r\} \subset E$ we will write $A$ for the increasing tuple $(a_1, \dots, a_r)$.

We give the definition of a matroid in terms of its basis, since
\cite[Theorem 1.2.3]{Oxley} shows that it is equivalent to the one given in terms of independent sets.
\begin{definition}
A matroid over a finite set $E$ is a non-empty set $\mathcal{B}\subset \mathcal{P}(E)$ such that
\begin{equation} \label{eq:exchange_property}
\forall \, B_1, B_2 \in \mathcal{B}\, \forall \, x \in B_1 \setminus B_2 \, \exists \, y \in B_2 \setminus B_1 \textnormal{ such that } B_1 \setminus \{x\} \cup \{y \} \in \mathcal{B}.
\end{equation}
\end{definition}
Since this definition of matroids is cryptomorphic to the one involving the \textit{rank function} (see \cite[Theorem 1.3.2]{Oxley}), we denote a matroid with the pair $(E,\rk)$.

Throughout this paper we will denote the $r$-tuples $(y_i,x_2, \dots, x_r)$ by $\underline{x}_i$ and $(y_0, \dots, y_{i-1}, y_{i+1}, \dots, y_r)$ by $\underline{y}^i$, for $i \in \{0, \dots, r\}$, where $\underline{x}=(x_2, \dots, x_r)$ and $\underline{y}=(y_0, \dots, y_r)$.

\begin{definition}[{\cite[Definition 3.5.3]{OrientedMatroidsBook}}] \label{def:chirotope}
A \textit{chirotope} is a function $\chi: E^r \rightarrow \{-1,0,1\}$ such that:
\begin{enumerate}[(B1)]
\setcounter{enumi}{-1}
\item it is not identically zero, i.e. $\chi \not \equiv 0$,
\item it is alternating, i.e. $\chi(\sigma \underline{x})= \sgn (\sigma) \chi(\underline{x})$ for all $\sigma \in \mathfrak{S}_n$,
\item \label{cond:B2} for all $x_2, \dots, x_r$ and all $y_0, \dots, y_{r} \in E$ such that 
\[ \chi(\underline{x}_i) \chi(\underline{y}^i)\geq 0, \]
for all $i>0$, then we have
\[ \chi(\underline{x}_0) \chi(\underline{y}^0)\geq 0. \]
\end{enumerate}
\end{definition}

\begin{definition}[{\cite[p.~134]{OrientedMatroidsBook}}]
The \textit{re-orientation} with respect to $A \subseteq E$ of a chirotope $\chi$ is the chirotope $\chi'$ defined by
\[ \chi'(\underline{x})=(-1)^{|A\cap \{x_1, \dots,x_r\}|}\chi(\underline{x}) .\]
Two chirotopes are equivalent if one is a re-orientation of the other one.
\end{definition}

The set $\{\{b_1, \dots, b_r\} \subset E \mid \chi(b_1, \dots, b_r) \neq 0\}$ is the matroid over $E$ associated with the chirotope $\chi$.
A well-known cryptomorphism of Lawrence \cite{Lawrence82} between \textit{oriented matroids} and chirotopes is stated in \cite[Theorem 3.5.5]{OrientedMatroidsBook}.

For every matroid $\mathcal{M}=(E,\rk)$ and every subset $A \subseteq E$ we denote by $\mathcal{M}/A$ the \textit{contraction} of $A$ and  with $\mathcal{M} \setminus A$ the \textit{deletion} of $A$.

Let us recall the definition of ``arithmetic matroid'' introduced in \cite{DM13,BM14}.

\begin{definition}
A \textit{molecule} $(A,B)$ of the matroid  is a pair of sets $A\subset B \subseteq E$ such that the matroid $(\mathcal{M}/A)\setminus B^c$ has a unique basis.
\end{definition}

For $A\subseteq E$ we denote the maximal subset $S\supseteq A$ of rank equal to $\rk(A)$ by $\overline{A}$.

\begin{definition}
An \textit{arithmetic matroid} is $(E,\rk,m)$ such that $(E,\rk)$ is a matroid and $m: \mathcal{P}(E) \rightarrow \N_+=\{ 1,2,\dots\}$ a function satisfying:
\begin{enumerate}
\item if $A \subseteq E$ and $x \in E$ is dependent on $A$, then $m(A \cup \{v\}) | m(A)$;
\item if $A \subseteq E$ and $x \in E$ is independent on $A$, then $m(A)| m(A \cup \{v\}) $;
\item \label{eq:cond_on_molecule} if $(A,B)$ is a molecule then 
\[ m(A)m(B)=m(B \cap \overline{A})m((B\setminus \overline{A})\cup A);\]
\item if $(A,B)$ is a molecule then 
\[ \rho (A,B)\defeq \sum_{A \subseteq S \subseteq B} (-1)^{|\overline{A}\cap B|-|S|} m(S) \geq 0.\]
\end{enumerate}
We call $m$ the \textit{multiplicity function}.
\end{definition}

\begin{definition}\label{def:oam}
An \textit{oriented arithmetic matroid} $(E,\rk,m,\chi)$ is a matroid %$\uCC \subset \mathcal{P}(E)$
$(E,\rk)$ of rank $r$ together with two structures: a chirotope $\chi:E^r \rightarrow \{-1,0,1\}$ and a multiplicity function $m: \mathcal{P}(E) \rightarrow \N_+$ such that:
\begin{enumerate}
\item The unoriented matroid associated with the chirotope $\chi$ is the matroid $(E,\rk)$.
\item The triple $(E,\rk, m)$ is an arithmetic matroid.
\item For all $x_2, \dots, x_r$ and all $y_0, \dots, y_{r} \in E$ the following equality holds
\begin{equation}\label{cond:GP}
\sum_{i=0}^r (-1)^i \chi(\underline{x}_i)m(\underline{x}_i) \chi(\underline{y}^i)m(\underline{y}^i)=0, \tag{GP}
\end{equation}
where $\underline{x}_i = (y_i,x_2 \dots ,x_r)$ and $\underline{y}^i = (y_0, \dots y_{i-1},y_{i+1} \dots y_r)$.
\end{enumerate}
\end{definition}
Since the rank function $\rk$ is completely determined by the chirotope $\chi$, we omit $\rk$ and write $(E,\chi,m)$ for a oriented arithmetic matroid.

\begin{remark}
Our property \eqref{cond:GP}, related to the Grassmannian-Pl\" uker relations, implies the properties $(\textnormal{GP}_r)$, for all $r$,  defined in \cite[Definition 10.3]{Lenz2}.
\end{remark}

Notice that the compatibility condition \eqref{cond:GP} involves only the values of the multiplicity function on the basis of $(E,\rk)$.
\begin{remark}
The condition \eqref{cond:GP} implies \ref{cond:B2} of \Cref{def:chirotope}.
\end{remark}

%\subsection{Realization of oriented arithmetic matroids}

\section{Deletion}
The deletion of $A\subset E$ is an operation defined for matroids \cite[p.~22]{Oxley}, for oriented matroids \cite[p.~133]{OrientedMatroidsBook}, and for arithmetic matroids \cite[section 4.3]{DM13} \cite[section 3]{BM14}.
We now define a deletion operation for oriented arithmetic matroids.

%Let $A$ be a subset of $E$.
The triple $(E\setminus A, \chi \setminus A, m \setminus A)$ satisfies the first two conditions of \Cref{def:oam}.
\begin{prop}
The triple $(E\setminus A, \chi \setminus A, m \setminus A)$ is an oriented arithmetic matroid.
\end{prop}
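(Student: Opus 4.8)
The plan is to verify the three conditions of \Cref{def:oam} for the triple $(E\setminus A,\chi\setminus A,m\setminus A)$. The first two are precisely the assertions that deletion is a well-defined operation on oriented matroids---so that $\chi\setminus A$ is a chirotope whose underlying matroid is $(E,\rk)\setminus A$---and on arithmetic matroids---so that $(E\setminus A,\rk,m\setminus A)$ satisfies the four axioms. Both are recorded in the sentence preceding the statement and rest on the constructions cited in \cite{OrientedMatroidsBook} and in \cite{DM13,BM14}. Hence the only thing left to establish is the compatibility condition \eqref{cond:GP}.

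First I would treat the case in which deletion preserves the rank, i.e. $\rk(E\setminus A)=r$. Here, by definition, $\chi\setminus A$ is the restriction of $\chi$ to $(E\setminus A)^r$ and $m\setminus A$ is the restriction of $m$ to $\mathcal{P}(E\setminus A)$. Fix arbitrary $x_2,\dots,x_r$ and $y_0,\dots,y_r\in E\setminus A$. Every tuple $\underline{x}_i$ and $\underline{y}^i$ assembled from these elements lies in $E\setminus A\subseteq E$, so the values $(\chi\setminus A)(\underline{x}_i)=\chi(\underline{x}_i)$ and $(m\setminus A)(\underline{x}_i)=m(\underline{x}_i)$ agree with the original data, and likewise for $\underline{y}^i$. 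Consequently the alternating sum \eqref{cond:GP} for the deletion coincides term by term with the sum \eqref{cond:GP} for $(E,\chi,m)$ evaluated at the same elements, which vanishes by hypothesis. This disposes of the rank-preserving case without any computation.

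The genuinely delicate point---and the step I expect to be the main obstacle---is the case $\rk(E\setminus A)<r$. Now $\chi\setminus A$ is a chirotope of strictly smaller rank $r'$, the tuples occurring in \eqref{cond:GP} for the deletion have length $r'$, and the relation is no longer a literal specialization of the original one. I would reduce to the previous case by factoring the deletion into successive removals of single elements: deleting a non-coloop leaves the rank unchanged and is covered above, whereas deleting a coloop $e$ lowers the rank by one. For a coloop step I would use the explicit description of the deleted chirotope in terms of $\chi(e,-)$ on $(r-1)$-tuples, together with the fact, forced by the arithmetic-matroid axioms, that a coloop contributes a common multiplicative factor to $m$. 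Substituting these into the length-$(r-1)$ instance of \eqref{cond:GP} for $\chi\setminus e$ should allow one to pull the coloop's sign and multiplicity out of every summand and to recognize the remainder as the original relation with $e$ inserted. The real work is the bookkeeping: tracking the signs produced by re-indexing the alternating sum after inserting $e$, and checking that the multiplicity factors coming from $m\setminus e$ align so that the common factor cancels uniformly across all terms.
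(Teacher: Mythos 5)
Your reduction to single-element deletions is a legitimately different route from the paper's, and the rank-preserving case is handled correctly: there the deleted chirotope and multiplicity are literal restrictions, so \eqref{cond:GP} is inherited term by term. The gap is in the coloop step, at the exact point you defer to ``the fact, forced by the arithmetic-matroid axioms, that a coloop contributes a common multiplicative factor to $m$''. That statement is the entire content of the proposition and it is not an axiom: nothing in the definition says directly that $m(B\cup\{e\})/m(B)$ is the same number for every basis $B$ of $E\setminus \{e\}$. It is true, but it has to be extracted from the molecule axiom \eqref{eq:cond_on_molecule}: for adjacent bases $B_1$ and $B_2=B_1\setminus\{x\}\cup\{y\}$ of $E\setminus\{e\}$, the molecules $(B_1,\,B_1\cup\{y,e\})$ and $(B_2,\,B_2\cup\{x,e\})$ yield
$m(B_1\cup\{e\})/m(B_1)=m(B_1\cup B_2\cup\{e\})/m(B_1\cup B_2)=m(B_2\cup\{e\})/m(B_2)$,
and one then invokes connectedness of the basis graph of $E\setminus\{e\}$ to globalize. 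Without some such argument the ``bookkeeping'' you postpone cannot be carried out, because for non-realizable arithmetic matroids there is no determinant to fall back on.

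For comparison, the paper does neither the element-by-element induction nor the global constancy claim. It fixes once and for all an independent completion $\uf\subseteq A$ with $\rk((E\setminus A)\cup\uf)=r$ and shows, for each instance of \eqref{cond:GP} in the deletion, that the correction factor $\frac{m(\ux\cup\uf)\,m(\uy\cup\uf)}{m(\ux)\,m(\uy)}$ equals $\bigl(m(\underline{x}\cup\underline{y}\cup\uf)/m(\underline{x}\cup\underline{y})\bigr)^2$, again by applying \eqref{eq:cond_on_molecule} to two molecules per term; this is visibly independent of $i$ because $\ux\cup\uy=\underline{x}\cup\underline{y}$ as sets. Hence all summands are rescaled by the same nonzero constant and the relation for the deletion follows from the relation for $(E,\chi,m)$ applied to the tuples extended by $\uf$. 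Your plan becomes a proof once you supply an analogous molecule computation; as written, the decisive step is asserted rather than proved.
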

\begin{proof}
Let $s$ be the rank of $E\setminus A$ and $\underline{f}=(a_1, \dots, a_{r-s}) \subseteq A$ such that $\rk((E\setminus A) \cup \underline{f})=r$.
Consider the elements $x_2, \dots, x_s$ and $y_0, \dots, y_s$ in $E\setminus A$.
For all $0\leq i \leq s$, the triples $(\ux, \uy, \uf)$ and $(\uy, \ux, \uf)$ are molecules.
The equality
\[ m(\ux \cup \uy)^2 m(\ux \cup \uf) m(\uy \cup \uf)=m(\ux \cup \uy \cup \uf)^2 m(\ux) m(\uy)\]
follows from condition \eqref{eq:cond_on_molecule} applies to the two molecules.
Notice that $\ux \cup \uy$ does not depend on $i$ so we can denote it $\underline{x} \cup \underline{y}$.
We have
\begin{multline*}
m(\underline{x} \cup \underline{y} \cup \uf)^2 \sum_{i=0}^s (-1)^i \chi(\ux \cup \uf) m(\ux) \chi(\uy \cup \uf) m(\uy) =\\
= m(\underline{x} \cup \underline{y})^2 \sum_{i=0}^s (-1)^i \chi(\ux \cup \uf) m(\ux \cup \uf) \chi(\uy \cup \uf) m(\uy \cup \uf).
\end{multline*}
The right side is, up to a non-zero scalar, the equation \eqref{cond:GP} applied to $x_2, \dots, x_s, a_1, \dots, a_{r-s}$ and $y_0, \dots, y_s, a_1, \dots, a_{r-s}$ for the oriented arithmetic matroid $(E, \chi, m)$.
Therefore, we have proven the claimed equality
\[\sum_{i=0}^s \chi(\ux \cup \uf) m(\ux) \chi(\uy \cup \uf) m(\uy)=0. \qedhere\]
\end{proof}

\section{Contraction}
The contraction (or restriction) of $A\subset E$ is an operation defined for matroids \cite[p.~22]{Oxley}, for oriented matroids \cite[p.~134]{OrientedMatroidsBook}, and for arithmetic matroids \cite[section 4.3]{DM13} \cite[section 3]{BM14}.
We now define a contraction operation for oriented arithmetic matroids.

Let $A$ be a subset of $E$ and call $r-s$ its rank.
We choose an independent list $\uf=(a_1, \dots, a_{r-s})$ of elements in $A$.
Define $\chi/A:(E\setminus A)^s \rightarrow \{-1,0,1\}$ as $\chi/A (\uz)=\chi(\uz \cup \uf)$ and $m/A(S)=m(A\cup S)$.
\begin{prop}
The triple $(E\setminus A, \chi/A, m/A)$ is an oriented arithmetic matroid.
\end{prop}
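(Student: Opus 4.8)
The plan is to verify the three conditions of \Cref{def:oam} for $(E\setminus A,\chi/A,m/A)$. The first two hold by the existing theory: the contraction of oriented matroids \cite[p.~134]{OrientedMatroidsBook} guarantees that $\chi/A$ is a chirotope whose underlying matroid is the matroid contraction $(E,\rk)/A$, and the contraction of arithmetic matroids \cite[section 4.3]{DM13} guarantees that this matroid together with $m/A$ is an arithmetic matroid. Hence all the work goes into the compatibility condition \eqref{cond:GP} for $\chi/A$ and $m/A$, which I would prove by mimicking the deletion proposition: first reduce to the original \eqref{cond:GP}, then convert basis values of $m$ into contraction values through a single molecule identity.

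For the reduction, fix $x_2,\dots,x_s$ and $y_0,\dots,y_s$ in $E\setminus A$ and apply \eqref{cond:GP} of $(E,\chi,m)$ to the lists $(x_2,\dots,x_s,a_1,\dots,a_{r-s})$ and $(y_0,\dots,y_s,a_1,\dots,a_{r-s})$, which have the correct lengths $r-1$ and $r+1$. For every summation index $i>s$ the leading entry of $\ux$ equals one of the $a_j$ already present in the tail $\uf$, so $\chi(\ux\cup\uf)=0$ because $\chi$ is alternating; only the indices $0\le i\le s$ survive and one is left with
\[ \sum_{i=0}^s (-1)^i \chi(\ux\cup\uf)\,m(\ux\cup\uf)\,\chi(\uy\cup\uf)\,m(\uy\cup\uf)=0, \]
the very identity reached in the deletion case, with $\ux=(y_i,x_2,\dots,x_s)$ and $\uy=(y_0,\dots,y_{i-1},y_{i+1},\dots,y_s)$.

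The heart of the argument is to replace the basis values $m(\ux\cup\uf)$ and $m(\uy\cup\uf)$ by the contraction multiplicities $m(A\cup\ux)$ and $m(A\cup\uy)$. I claim that for any $\uz\in\{\ux,\uy\}$ such that $\uz\cup\uf$ is a basis one has
\[ m(\uf)\,m(A\cup\uz)=m(A)\,m(\uf\cup\uz). \]
To see this, apply condition \eqref{eq:cond_on_molecule} to the pair $(\uf,A\cup\uz)$: since $\uf$ spans $A$, every element of $A\setminus\uf$ is a loop of $\mathcal{M}/\uf$, so $(\mathcal{M}/\uf)\setminus(A\cup\uz)^c$ has the unique basis $\uz$ and $(\uf,A\cup\uz)$ is indeed a molecule. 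Because $A\subseteq\overline{\uf}$ while no element of $\uz$ lies in $\overline{\uf}$ (else $\uf\cup\uz$ would be dependent), we get $(A\cup\uz)\cap\overline{\uf}=A$ and $((A\cup\uz)\setminus\overline{\uf})\cup\uf=\uf\cup\uz$, so \eqref{eq:cond_on_molecule} reads exactly as the displayed proportionality. Spotting this molecule, and checking that its closure term collapses to $A$, is the one genuinely non-mechanical step; everything else is bookkeeping parallel to the deletion case.

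Finally, substitute $m(\ux\cup\uf)=\frac{m(\uf)}{m(A)}m(A\cup\ux)$ and $m(\uy\cup\uf)=\frac{m(\uf)}{m(A)}m(A\cup\uy)$ into the reduced equation (the multiplicities are positive integers, so dividing by $m(A)$ is harmless). The $i$-independent factor $m(\uf)^2/m(A)^2$ pulls out of the sum, leaving
\[ \sum_{i=0}^s (-1)^i \chi(\ux\cup\uf)\,m(A\cup\ux)\,\chi(\uy\cup\uf)\,m(A\cup\uy)=0, \]
which is \eqref{cond:GP} for $\chi/A$ and $m/A$ by the very definitions $\chi/A(\uz)=\chi(\uz\cup\uf)$ and $m/A(S)=m(A\cup S)$. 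The only point to record is that the molecule identity is invoked only for those $i$ with $\chi(\ux\cup\uf)\chi(\uy\cup\uf)\neq0$; for the remaining indices both the original and the substituted terms vanish because of the $\chi$ factor, so the substitution is legitimate term by term.
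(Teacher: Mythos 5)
Your proof is correct and follows essentially the same route as the paper: both identify the molecule $(\uf, A\cup\uz)$ (the paper writes it as $(\uf,T,\uz)$ with $T=A\setminus\uf$), extract the proportionality $m(\uf)\,m(A\cup\uz)=m(A)\,m(\uf\cup\uz)$ from condition \eqref{eq:cond_on_molecule}, and thereby reduce \eqref{cond:GP} for the contraction to \eqref{cond:GP} for the extended lists in the original matroid. Your write-up is somewhat more explicit than the paper's (vanishing of the terms with $i>s$, restriction of the molecule identity to basis terms), but the underlying argument is the same.
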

\begin{proof}
We call $T=A\setminus \uf$ and fix the elements $x_2 \dots x_s$ and $y_0 \dots y_s$ of $E\setminus A$.
Observe that $(\uf, T, \ux)$ and  $(\uf, T, \uy)$ are molecules of $(E,\rk)$.
Thus
\[ m(A)^2 m(\ux \cup \uf)m(\uy \cup \uf)= m(\uf)^2 m(\ux \cup A)m(\uy \cup \uf).\]
Since $m(A)$ and $m(f)$ are nonzero, then condition \eqref{cond:GP} for $\underline{x}$ and $\underline{y}$ in the contracted matroid is equivalent to condition \eqref{cond:GP} for $\underline{x}\cup \uf$ and $\underline{y}\cup \uf$ in the original matroid.
\end{proof}

\section{Duality}
The duality is an operation defined for matroids \cite[chapter 2]{Oxley}, for oriented matroids \cite[p.~135]{OrientedMatroidsBook}, and for arithmetic matroids \cite[p.~339]{DM13} \cite[p.~5526]{BM14}.
We now define duality for oriented arithmetic matroids.

Recall that the set $E$ is ordered.
For every $\uz=(z_1, \dots, z_{k})\subseteq E$ we call $\uz'$ the complement of $\uz$ in $E$ with some arbitrary order and let $\sigma(\uz, \uz')$ be the sign of the permutation that reorders the list $(\uz, \uz')$ as they appear in $E$.
We define $\chi^*:E^{n-r} \rightarrow \{-1,0,1\}$ as
\[ \chi^*(\uz)=\chi(\uz') \sigma(\uz,\uz')\]
and the multiplicity function $m^*: \mathcal{P}(E) \rightarrow \N_+$ as $m^*(\uz)=m(\uz')$.
\begin{prop}
The triple $(E,\chi^*, m^*)$ is an oriented arithmetic matroid.
\end{prop}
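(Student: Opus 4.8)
The plan is to verify the three conditions of \Cref{def:oam} for $(E,\chi^*,m^*)$. The first two I would dispatch by citation: the dual $\chi^*$ of a chirotope is again a chirotope, with underlying matroid the dual matroid $(E,\rk^*)$ \cite[p.~135]{OrientedMatroidsBook}, and $(E,\rk^*,m^*)$ is an arithmetic matroid \cite[p.~339]{DM13}, \cite[p.~5526]{BM14}. The only condition needing a genuine argument is \eqref{cond:GP} for the pair $(\chi^*,m^*)$.

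To set this up, write $p(\uz)\defeq\chi(\uz)m(\uz)$, so that $\chi^*(\uz)m^*(\uz)=\sigma(\uz,\uz')\,p(\uz')$ by the definitions of $\chi^*$ and $m^*$. The dual rank is $n-r$, so I would fix tuples $\underline{x}=(x_2,\dots,x_{n-r})$ and $\underline{y}=(y_0,\dots,y_{n-r})$ in $E$ and substitute this identity into the left-hand side of \eqref{cond:GP} written for $(\chi^*,m^*)$. Since $(\underline{x}_i)'$ and $(\underline{y}^i)'$ are complementary $r$-subsets, the sum turns into
\[ \sum_{i=0}^{n-r}(-1)^i\,\sigma(\underline{x}_i,(\underline{x}_i)')\,\sigma(\underline{y}^i,(\underline{y}^i)')\;p\big((\underline{x}_i)'\big)\,p\big((\underline{y}^i)'\big). \]
Setting $C=E\setminus\{x_2,\dots,x_{n-r}\}$ (of size $r+1$) and $D=E\setminus\{y_0,\dots,y_{n-r}\}$ (of size $r-1$), I would observe $(\underline{x}_i)'=C\setminus\{y_i\}$ and $(\underline{y}^i)'=D\cup\{y_i\}$. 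Thus every surviving product has the form $p(D\cup\{y_i\})\,p(C\setminus\{y_i\})$, which is exactly the shape of the primal relation \eqref{cond:GP} with small tuple $D$ and large tuple $C$: that relation reads $\sum_{c\in C}\pm\,p(D\cup\{c\})\,p(C\setminus\{c\})=0$. The terms with $c\in C\cap D$ vanish, since then $D\cup\{c\}$ repeats an entry and $\chi$ is alternating, leaving precisely the indices $c\in C\setminus D=\{y_0,\dots,y_{n-r}\}\setminus\{x_2,\dots,x_{n-r}\}$. These are exactly the $y_i$ contributing to the dual sum, because the terms with $y_i\in\{x_2,\dots,x_{n-r}\}$ already vanish on the dual side ($\underline{x}_i$ then repeats an entry). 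I emphasize that the multiplicities appearing, $m((\underline{x}_i)')$ and $m((\underline{y}^i)')$, are already those of the primal relation, so—unlike the deletion and contraction cases—no extra molecule identity is needed.

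What remains, and what I expect to be the only delicate point, is matching the signs. Concretely, I would show that the weight $(-1)^i\,\sigma(\underline{x}_i,(\underline{x}_i)')\,\sigma(\underline{y}^i,(\underline{y}^i)')$ equals, up to one factor independent of $i$, the alternating sign $(-1)^{j(i)}$ carried by $y_i$ as the $j(i)$-th entry of the ordered tuple $C$ in the primal relation. This is a routine but careful computation with permutation signs: I would expand both shuffle signs in terms of the position of $y_i$ and compare them with the shuffle signs of $(C,E\setminus C)$ and $(D,E\setminus D)$; all $i$-dependence should then collapse into $(-1)^{j(i)}$, the remaining factors being constant in $i$. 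Granting this, the displayed dual sum equals a nonzero scalar times the primal \eqref{cond:GP} for $D$ and $C$, hence vanishes, which finishes the verification.
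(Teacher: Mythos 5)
Your proposal is correct and follows essentially the same route as the paper: both reduce the dual \eqref{cond:GP} for $(\underline{x},\underline{y})$ to the primal \eqref{cond:GP} applied to the complementary tuples (your $D$ and $C$ are the paper's $\underline{y}'$ and $\underline{x}'$), using that the multiplicities transfer directly and that only the indices with $y_i\in C\setminus D$ survive. The sign bookkeeping you defer is handled in the paper by the explicit identities $\sigma(\underline{x}_i,(\underline{x}_i)')=(-1)^{n-r+1+j}\sigma(\underline{x},\underline{x}')$ and $\sigma(\underline{y}^i,(\underline{y}^i)')=(-1)^{n-r+i}\sigma(\underline{y},\underline{y}')$, whose product with $(-1)^i$ collapses to $-(-1)^j$ times an $i$-independent factor, exactly as you predict.
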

\begin{proof}
Let $\underline{x}=(x_2, \dots, x_{n-r})$ and $\underline{y}=(y_0, \dots, y_{n-r})$ be two sublists of $E$.
Coherently with the notation above, let $\underline{x}'=(x_0', \dots, x_r')$ and $\underline{y}'=(y_2', \dots, y'_r)$ be their complements.
For every $0\leq i \leq n-r$ the element $y_i$ is equal to $x_k$ or $x_j'$.
In the first case $\chi^*(\ux)=0$ and in the second case 
\[
\chi^*(\ux)=\chi(\underline{x}^{\prime j})\sigma(\ux,\underline{x}^{\prime j}) = (-1)^{n-r+1+j}\chi(\underline{x}^{\prime j})\sigma(\underline{x},\underline{x}').
\]
Analogously, if $y_i=x_j'$ then
\[ \chi^*(\uy)=\chi(\underline{y}'_j)\sigma(\uy,\underline{y}'_j)= (-1)^{n-r+i}\chi(\underline{y}'_j)\sigma(\underline{y},\underline{y}')\]
where $\underline{y}'_j=(x'_j, y'_2, \dots, y_r')$.
If $y_i=x'_j$, then %the multiplicity function of the dual is
$m^*(\ux)=m(\underline{x}^{\prime j})$ and $m^*(\uy)=m(\underline{y}'_j)$.
Thus, up to a sign, the condition \eqref{cond:GP} for $\underline{y}'$ and $\underline{x}'$ in the original matroid implies condition \eqref{cond:GP} for $\underline{x}$ and $\underline{y}$ in the dual matroid.
\end{proof}

\section{GP-functions}
We now study functions satisfying a relation that looks like the Pl\" uker relation for the Grassmannian.
A posteriori all these functions are nothing else that the determinant $\det: V^r \rightarrow \Q$ restricted to a finite (multi-)set $E\subset V$.

\begin{definition}
A map $f:E^r \rightarrow \Q$ is a \textit{GP-function} if it is alternating and for all $\underline{x}\in E^{r-1}$ and all  $\underline{y}\in E^{r+1}$ the following equality holds
\[\sum_{i=0}^r (-1)^i f(y_i,x_2 \dots ,x_r) f(y_0, \dots y_{i-1},y_{i+1} \dots y_r)=0 \]
\end{definition}

The main examples of GP-function are the function $\chi m$ for every oriented arithmetic matroid.
Another example is given by a map $i:E\rightarrow V$, where $V$ is a $\Q$-vector space of dimension $n$, and consider the function $\det:V^n \rightarrow \Q$.
The determinant is a GP-function and the following theorem is a generalization of the Leibniz formula.

\begin{thm}\label{thm:Leibniz_formula}
Let $f:E^r \rightarrow \mathbb{Q}$ be a GP-function.
Then for all $(a_1,\dots, a_r)$ in $E^{r}$ and $(b_1, \dots, b_r) \in E^{r}$ the following formula holds:
\begin{equation}\label{eq:Leibniz_formula}
\sum_{\sigma \in \mathfrak{S}_r}(-1)^{\operatorname{sgn} \sigma} \prod_{i=1}^r f(a_1, \dots, b_{\sigma(i)}, \dots, a_r)=f(a_1, \dots, a_r)^{r-1} f(b_1, \dots, b_r),
\end{equation}
where $b_{\sigma(i)}$ substitutes $a_i$.
\end{thm}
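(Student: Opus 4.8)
The plan is to read the left-hand side of \eqref{eq:Leibniz_formula} as a determinant and to argue by induction on $r$. Writing $[c_1,\dots,c_r]\defeq f(c_1,\dots,c_r)$, I would set $M_{ij}=[a_1,\dots,a_{i-1},b_j,a_{i+1},\dots,a_r]$, the value of $f$ on $(a_1,\dots,a_r)$ with $a_i$ replaced by $b_j$, so that the left-hand side is exactly $\det M$ while the right-hand side is $[a_1,\dots,a_r]^{r-1}[b_1,\dots,b_r]$. For $r=1$ both sides equal $f(b_1)$, giving the base case; note also that both sides are homogeneous of the same degree in the values of $f$, so no non-vanishing hypothesis will be needed and the whole statement is a genuine polynomial identity.

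The first ingredient I would isolate is that GP-functions are closed under freezing one argument. For $c\in E$ define $h\colon E^{r-1}\to\mathbb{Q}$ by $h(z_1,\dots,z_{r-1})=[z_1,\dots,z_{r-1},c]$. Then $h$ is alternating, and specialising \eqref{cond:GP} at $x_r=y_r=c$ kills the one term in which the frozen factor is repeated and turns the remaining $r$ terms into the rank-$(r-1)$ GP-relation for $h$; hence $h$ is again a GP-function. Freezing $c$ in an arbitrary slot instead of the last one only changes $h$ by a global sign, which is harmless. This is the GP-function incarnation of the contraction operation of the previous section.

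For the inductive step I would expand $\det M$ along its first column, $\det M=\sum_{i=1}^r(-1)^{i+1}M_{i1}\,N_i$, where $N_i$ is the minor on rows $\{1,\dots,r\}\setminus\{i\}$ and columns $\{2,\dots,r\}$. In every entry of $N_i$ the $i$-th slot keeps the value $a_i$, so $N_i$ is precisely the Leibniz matrix of the frozen GP-function $h_i(\,\cdot\,)=[\,\cdot\ \text{with } a_i \text{ in slot } i\,]$, with base tuple $(a_1,\dots,\widehat{a_i},\dots,a_r)$ and substituted elements $b_2,\dots,b_r$. Applying the inductive hypothesis to the rank-$(r-1)$ GP-function $h_i$ gives $N_i=[a_1,\dots,a_r]^{\,r-2}B_i$, where $B_i=h_i(b_2,\dots,b_r)$ is the value of $f$ on $(a_1,\dots,a_r)$ with every slot except the $i$-th replaced by $b_2,\dots,b_r$. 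Factoring out $[a_1,\dots,a_r]^{\,r-2}$, the theorem reduces to the single identity
\[ \sum_{i=1}^r (-1)^{i+1} M_{i1}\,B_i=[a_1,\dots,a_r][b_1,\dots,b_r]. \]

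Finally, I would recognise this last identity as one instance of \eqref{cond:GP}: take the context $(x_2,\dots,x_r)=(b_2,\dots,b_r)$ and let $(y_0,\dots,y_r)$ run over $(b_1,a_1,\dots,a_r)$. The index $y_j=b_1$ produces the product $[b_1,\dots,b_r][a_1,\dots,a_r]$, which is the right-hand side, and the index $y_j=a_i$ produces, after moving $b_1$ and $a_i$ to the front by the alternating property, exactly $(-1)^{i+1}M_{i1}B_i$. The one delicate point is matching the several sign contributions — the cofactor sign $(-1)^{i+1}$, the GP sign $(-1)^i$, and the two factors $(-1)^{i-1}$ from the reorderings — and checking that they cancel in pairs; I expect this bookkeeping, together with the careful identification of each minor $N_i$ with the Leibniz matrix of $h_i$, to be the only (purely mechanical) obstacle.
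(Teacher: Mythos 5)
Your proof is correct and follows essentially the same strategy as the paper's: induction on $r$, a cofactor expansion of the determinant whose entries are $f(a_1,\dots,b_j,\dots,a_r)$, the inductive hypothesis applied to a GP-function obtained by freezing one slot, and a single application of the relation \eqref{cond:GP} to finish. The only (cosmetic) difference is that you expand along the column of $b_1$, freezing slot $i$ at $a_i$ for each $i$, whereas the paper expands along the row of slot $1$, freezing it at $a_1$; the sign bookkeeping you defer does check out.
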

\begin{proof}
We prove lemma by induction, the base case $r=2$ is trivial.
We fix $(a_1,\dots, a_r)\in E^{r}$ and $(b_1, \dots, b_r) \in E^{r}$.
Let $g:E^{r-1} \rightarrow \Q$ be the GP-function defined by
\[ g(x_2, \dots, x_r)= f(a_1,x_2, \dots, x_r).\]
By inductive step we have
\begin{equation}\label{eq:inductive_step}
\sum_{\sigma \in \mathfrak{S}_{r-1}}(-1)^{\operatorname{sgn} \sigma} \prod_{i=2}^r g(a_2, \dots, c_{\sigma(i)}, \dots, a_r)= g(a_2, \dots, a_r)^{r-2} g(c_2, \dots, c_r).
\end{equation}
The left hand side of the eq.~\eqref{eq:Leibniz_formula} can be rewritten as:
\begin{equation}\label{eq:uff_ci_siamo_quasi}
\sum_{j=1}^r f(b_{j},a_2, \dots, a_r) \sum_{\sigma \in \mathfrak{S}_{r-1}} (-1)^{\sgn \sigma + \sgn \tau_j} \prod_{i=2}^r f(a_1, \dots, b_{\sigma(\tau_j(i))}, \dots, a_r),
\end{equation}
where $\tau_j=(1,j)$ and $\mathfrak{S}_{r-1}$ is the subgroup of $\mathfrak{S}_r$ of permutations that fix the element $1$.
Now, for every $j$, we use eq.~\eqref{eq:inductive_step} with $c_i= b_{ \tau_j(i)}$ to manipulate expression \eqref{eq:uff_ci_siamo_quasi}:
\[
\begin{split}
f(a_1, \dots, a_r)^{n-2} \Bigl[ f(b_1,a_2, \dots, a_r) f(a_1,b_2, \dots, b_r) - &\sum_{j=1}^r  f(b_{j},a_2, \dots, a_r) \cdot \\
&f(a_1,b_2, \dots, b_1, \dots, b_r)\Bigl] 
\end{split}
\]
that it is equal to left hand side of \eqref{eq:Leibniz_formula} since $f$ is a GP-function.
\end{proof}

\begin{lemma}\label{lemma:BG1_implies_BG}
Let $f$ and $g$ be two GP-functions and $B \in  E^r$.
Suppose that $f(B)=g(B) \neq 0$ and $f(C)=g(C)$ for all $C\in E^r$ such that $|\{i \mid c_i \neq b_i\}|=1$, then $f=g$.
\end{lemma}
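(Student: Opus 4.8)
The plan is to reduce everything to the Leibniz-type identity \eqref{eq:Leibniz_formula} of \Cref{thm:Leibniz_formula}, expanding an arbitrary value of $f$ (or $g$) around the fixed tuple $B$. The point is that such an expansion only ever evaluates the GP-function at tuples that differ from $B$ in at most one coordinate, and on exactly these tuples $f$ and $g$ are assumed to agree.

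Concretely, I would fix an arbitrary $C=(c_1,\dots,c_r)\in E^r$ and aim to prove $f(C)=g(C)$. Applying \eqref{eq:Leibniz_formula} to $f$ with the first tuple taken to be $B=(b_1,\dots,b_r)$ and the second taken to be $C$ gives
\[
f(B)^{r-1}f(C)=\sum_{\sigma\in\mathfrak{S}_r}(-1)^{\sgn\sigma}\prod_{i=1}^r f(b_1,\dots,c_{\sigma(i)},\dots,b_r),
\]
and the same identity holds with $g$ in place of $f$. Every factor on the right is the value of the function on the tuple obtained from $B$ by replacing the single entry $b_i$ with $c_{\sigma(i)}$.

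The key step is to check that each such factor is unchanged when $f$ is replaced by $g$. There are three cases: if $c_{\sigma(i)}=b_i$ the tuple is $B$ itself, and $f(B)=g(B)$ by hypothesis; if $c_{\sigma(i)}=b_k$ for some $k\neq i$ the tuple has a repeated entry, so both sides vanish by the alternating property; and otherwise the tuple differs from $B$ in exactly the $i$-th coordinate, so $f$ and $g$ agree there by assumption. Hence the two right-hand sides coincide, whence $f(B)^{r-1}f(C)=g(B)^{r-1}g(C)$. Since $f(B)=g(B)\neq 0$, dividing by this common nonzero quantity yields $f(C)=g(C)$, and as $C$ was arbitrary we conclude $f=g$.

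I do not anticipate a real obstacle once \Cref{thm:Leibniz_formula} is in hand: the whole argument is the observation that the Leibniz expansion around $B$ calls the function only at single-coordinate perturbations of $B$. The one place deserving care is the middle case above, where the substituted element happens to equal a retained entry of $B$; it is exactly there that the alternating property is needed to force the two degenerate factors to agree (both being zero) rather than relying on the one-coordinate hypothesis.
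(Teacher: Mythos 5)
Your proof is correct and is essentially the paper's own argument: apply \Cref{thm:Leibniz_formula} with the first tuple equal to $B$ and the second arbitrary, note that both expansions involve only tuples differing from $B$ in at most one coordinate (where $f$ and $g$ agree), and divide by $f(B)^{r-1}\neq 0$. Your explicit three-case check of the factors is a slightly more careful write-up of the step the paper leaves implicit.
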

\begin{proof}
We use \Cref{thm:Leibniz_formula} for the function $f$ and $g$.
We set $\{a_1,\dots, a_r\}=B$ in eq.~\eqref{eq:Leibniz_formula}, the left hand side for $f$ and $g$ are equal, so 
\[f(a_1, \dots, a_r)^{r-1} f(b_1, \dots, b_r)=g(a_1, \dots, a_r)^{r-1} g(b_1, \dots, b_r).\]
By hypothesis $f(a_1, \dots, a_r)=g(a_1, \dots, a_r)\neq 0$, thus we have $f(b_1, \dots, b_r)=g(b_1, \dots, b_r)$ for all $b_i$, $i=1,\dots, r$.
\end{proof}

\section{Uniqueness of the orientation} \label{sect:uniq_orientation}

\begin{thm}\label{thm:uniqueness_orientation}
Let $(E,\chi, m)$ and $(E;\chi',m)$ be two oriented arithmetic matroids over the same matroid $(E,\rk)$.
Then $\chi'$ is a re-orientation of $\chi$.
\end{thm}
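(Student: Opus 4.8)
The plan is to combine the fact that both $\chi m$ and $\chi' m$ are GP-functions with the rigidity supplied by \Cref{lemma:BG1_implies_BG}. First I would record that $\chi m$ and $\chi' m$ have the same support, namely the bases of $(E,\rk)$, and the same absolute value $m$ there; since both are alternating, their ratio descends to a well-defined sign function $\varepsilon\colon \mathcal{B}\to\{\pm1\}$ on the (unordered) bases with $\chi'=\varepsilon\chi$. The theorem is then equivalent to the assertion that $\varepsilon$ is induced by a re-orientation, i.e.\ that there exists $A\subseteq E$ with $\varepsilon(D)=(-1)^{|A\cap D|}$ for every basis $D$.

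Next I would reduce this global statement to a purely local one. Fix a basis $B=(b_1,\dots,b_r)$ and write $\chi'_A$ for the re-orientation of $\chi'$ with respect to $A$. It suffices to find $A\subseteq E$ such that $\chi'_A$ agrees with $\chi$ on $B$ and on every tuple obtained from $B$ by changing a single entry: on non-bases both functions vanish, and on the bases $B\setminus\{b_k\}\cup\{e\}$ one only needs to match signs. Applying \Cref{lemma:BG1_implies_BG} to the GP-functions $f=\chi m$ and $g=\chi'_A m$, which then agree on $B$ and on all its single-element modifications, yields $\chi m=\chi'_A m$; since $m>0$ this forces $\chi=\chi'_A$, so that $\chi'$ is a re-orientation of $\chi$, as desired.

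To produce such an $A$ I would write its indicator as $a\colon E\to\mathbb{Z}/2\mathbb{Z}$ and translate the required sign matches on $B$ and on the bases $B\setminus\{b_k\}\cup\{e\}$ into a system of $\mathbb{Z}/2\mathbb{Z}$-linear equations $\sum_{d\in D}a(d)\equiv\eta(D)$, where $\varepsilon=(-1)^{\eta}$. Solvability of this system amounts to a consistency (cocycle) condition: the prescribed right-hand sides must be compatible whenever the characteristic vectors of the involved bases are linearly dependent over $\mathbb{Z}/2\mathbb{Z}$. The crucial point is that exactly these compatibilities are delivered by \eqref{cond:GP}. Concretely, I would feed into \eqref{cond:GP}, written for both $\chi m$ and $\chi' m$, instances in which only two products are nonzero, for example by choosing the free elements so that $B\setminus\{b_k,b_{k'}\}\cup\{e,e'\}$ is dependent: the relation for $\chi m$ then forces two Pl\"ucker products to be equal, and the relation for $\chi' m$ forces the associated sign discrepancies to agree, giving relations of the shape $\eta(B\setminus\{b_{k'}\}\cup\{e\})+\eta(B\setminus\{b_k\}\cup\{e'\})\equiv\eta(B\setminus\{b_{k'}\}\cup\{e'\})+\eta(B\setminus\{b_k\}\cup\{e\})$. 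These are precisely the relations needed to split $\eta$ into contributions of single elements, i.e.\ to solve for $a$.

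The main obstacle is this last consistency step: verifying that the local sign data assemble into a single global re-orientation $A$. This is where \eqref{cond:GP}, rather than the mere chirotope axioms, is essential, since a general orientable matroid may carry inequivalent orientations; the uniqueness must therefore come from the interaction of $\chi$ with the multiplicity function $m$ encoded in \eqref{cond:GP}. I expect the cleanest route is to organize the $\mathbb{Z}/2\mathbb{Z}$-system along the basis-exchange graph of $(E,\rk)$ and to check consistency around its generating cycles using the two-nonzero-term specializations of \eqref{cond:GP} above; \Cref{lemma:BG1_implies_BG} then upgrades the resulting local agreement to the global equality $\chi=\chi'_A$.
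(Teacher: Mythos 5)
Your overall architecture matches the paper's: reduce to making $\chi$ and a suitable re-orientation of $\chi'$ agree on $B_0$ and on all bases obtained from $B_0$ by a single exchange (the set $\mathcal{BG}_{\leq 1}$), then invoke \Cref{lemma:BG1_implies_BG} for the GP-functions $\chi m$ and $\chi' m$ to globalize. That reduction is sound (a re-orientation multiplies every term of \eqref{cond:GP} by the same sign, so it preserves the structure), and your translation into a $\mathbb{Z}/2\mathbb{Z}$-linear system on the fundamental circuit graph $\mathcal{G}$ --- solvable iff the prescribed labels sum to zero around every cycle of $\mathcal{G}$ --- is the right bookkeeping; the spanning-forest degrees of freedom are exactly the content of the paper's \Cref{lemma:positivity_L(P)}.

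The gap is in the consistency check, which is where all the work lies. You verify the cocycle condition only around quadrilaterals of $\mathcal{G}$, using instances of \eqref{cond:GP} with two nonzero products; this needs all four of $B_0\setminus\{b_k\}\cup\{e\},\dots,B_0\setminus\{b_{k'}\}\cup\{e'\}$ to be bases while $B_0\setminus\{b_k,b_{k'}\}\cup\{e,e'\}$ is dependent. But the cycles of $\mathcal{G}$ relative to a spanning forest need not be quadrilaterals: $\mathcal{G}$ can contain chordless cycles of length $2t+2$ with $t>1$, and for these no single instance of \eqref{cond:GP} yields a two-term relation among the sign discrepancies on $\mathcal{BG}_1$ (the three-term instance you would write involves a basis at distance two from $B_0$, which says nothing about your unknowns). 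This is precisely the configuration the paper's \Cref{lemma:dist_1} handles: it takes a minimal missing edge $(h,k)$ together with a minimal path $Q$ of length $2t+1$ inside the already-verified subgraph, and applies the generalized Leibniz identity of \Cref{thm:Leibniz_formula} --- itself derived from \eqref{cond:GP} by induction on $r$ --- whose left-hand side collapses to exactly two nonzero permutation products supported on $Q\cup\{(h,k)\}$, giving the two-term relation $ax+b=c$ that pins down the last sign. So your plan is correct in outline, but the ``crucial point'' is not delivered by two-term specializations of \eqref{cond:GP} alone; you need the full strength of \Cref{thm:Leibniz_formula} (or an equivalent iterated use of \eqref{cond:GP}) to close the consistency argument on long induced cycles.
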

We fix a total order on $E\simeq [n]$ such that $[r]$, the first $r$ elements, are a basis of the matroid.

The basis graph of a matroid is first studied in \cite{MaurerI} and \cite{MaurerII}.
\begin{definition}
The \textit{basis graph} $\mathcal{BG}$ of a matroid $(E, \mathcal{B})$ is the graph on the set $\mathcal{B}$ of vertexes  with an edge between two vertexes $B_1$ and $B_2$ if $|B_1 \setminus B_2|=1$.
\end{definition}
%Call $\mathcal{BG}_{1}$ the set of all basis that differs from $[r]$ by only one element, i.e. the set of vertexes of $\mathcal{BG}$ that are adjacent to $[r]$.
%We also set $\mathcal{BG}_{\leq 1}=\mathcal{BG}_1 \cup \{[r]\}$.
%By abuse of notation we call $\mathcal{BG}_1$ the induced subgraph of $\mathcal{BG}_1$ in $\mathcal{BG}$.

Once chosen a basis $B_0$ of a matroid, we define $\mathcal{BG}_1$ to be the induced subgraph of $\mathcal{BG}$ whose vertexes are all vertexes adjacent to $B_0$.
Define $\mathcal{BG}_{\leq 1}$ the induced subgraph whose vertexes are the ones adjacent to $B_0$ and $B_0$ itself. 

%We prove \Cref{thm:uniqueness_orientation} by induction on the basis graph of the matroid: %(defined in \cite[p~132]{OM_book}, \cite{Mauer1} or  \cite{Mauer2})
%we have chosen a vertex $B_0=[r]$ of the basis graph, up to change $\chi'$ with $-\chi'$ we can s
Suppose that $\chi([r])=\chi'([r])$,
\Cref{lemma:dist_1} proves that, up to reorientation, $\chi$ and $\chi'$ coincides on all vertexes of distance one from $[r]$.
\Cref{lemma:induction_basis_graph} proves that $\chi(B)=\chi'(B)$ using \Cref{thm:Leibniz_formula}.

\begin{definition}
Let $\mathcal{G}$ be the bipartite graph on vertexes $E$ and an edge between $i\in B_0$ and $j \in E\setminus B_0$ if $B_0\setminus \{i\} \cup \{j\}$ is a basis.
We call this graph the $B_0$-\textit{fundamental circuit graph}.
\end{definition}

\begin{definition}
The \textit{Line graph} $L(\mathcal{G})$ of a graph $\mathcal{G}=(\mathcal{V}, \mathcal{E})$ is the graph whose set of vertexes is the set $\mathcal{E}$ of edges in $\mathcal{G}$.
The graph $L(\mathcal{G})$ has an edge between $e_1$ and $e_2\in \mathcal{E}$ if and only if the edges $e_1$ and $e_2$ are incident in $\mathcal{G}$.
\end{definition}

The Line graph of $\mathcal{G}$ is the graph $\mathcal{BG}_1$.
A \textit{coordinatizing path} in $\mathcal{G}$ is a spanning forest of the graph $\mathcal{G}$.
We choose a coordinatizing path $P$ of the graph $\mathcal{G}$ and its Line graph $L(P)$ is an induced subgraph of $\mathcal{BG}_1$.

The following lemma is essentially proven in \cite[Lemma 6]{Lenz}.

\begin{lemma}\label{lemma:positivity_L(P)}
Let $(E,\chi,m)$ be a oriented arithmetic matroid with basis graph $\mathcal{BG}$, $B_0$ be a vertex of $\mathcal{BG}$ and $P$ be a coordinatizing path in a graph $\mathcal{G}$, such that $L(\mathcal{G})=\mathcal{BG}_1$.
%Let $\chi$ be a chirotope of the matroid $(E,\rk,m)$.
Then there exists a re-orientation $\chi'$ of $\chi$ such that $\chi'(B)=\chi'(B_0)$ for all vertexes $B \in L(P)$.
\end{lemma}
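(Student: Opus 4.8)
The goal is to re-orient $\chi$ so that its sign becomes constant on the vertices of $L(P)$, where $P$ is a coordinatizing path (a spanning forest) of the fundamental circuit graph $\mathcal{G}$. I would first unwind what the vertices of $L(P)$ are concretely: an edge of $\mathcal{G}$ joining $i\in B_0$ to $j\in E\setminus B_0$ corresponds to the basis $B_0\setminus\{i\}\cup\{j\}$, so the vertices of $L(P)$ are exactly the bases obtained from $B_0$ by a single exchange coming from an edge of the forest $P$. The strategy is to pick a re-orientation $A\subseteq E$ and fix the signs one edge at a time by traversing $P$.

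\textbf{Key steps.} First I would record the basic sign computation: for a fundamental-circuit edge $e=\{i,j\}$, equation \eqref{cond:GP} applied to a suitable choice of $\underline{x}$ and $\underline{y}$ (taking $\underline{x}$ to be $B_0$ with the slot of $i$ removed, and $\underline{y}$ to consist of $i$, $j$ together with the rest of $B_0$) collapses to a three-term or two-term relation that pins down the product $\chi(B_0)\,\chi(B_0\setminus\{i\}\cup\{j\})$ in terms of multiplicities, which are strictly positive. This is exactly the content borrowed from \cite[Lemma 6]{Lenz}: the sign of each neighbouring basis relative to $B_0$ is controlled, and the positivity of $m$ guarantees the relevant coefficients do not vanish. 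Second, I would process the forest $P$ rooted (in each component) at a vertex incident to $B_0$, and define the re-orientation set $A$ by walking outward along the tree: each time an edge of $P$ forces a sign that disagrees with $\chi(B_0)$, I toggle membership in $A$ of the newly reached ground-set element, so that after the re-orientation $\chi'(\underline{x})=(-1)^{|A\cap\{x_1,\dots,x_r\}|}\chi(\underline{x})$ the value on that neighbouring basis is corrected to equal $\chi'(B_0)$. Because $P$ is a \emph{forest}, there are no cycles, so these local sign choices never conflict and the assignment of $A$ is well-defined and consistent.

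\textbf{The main obstacle.} The delicate point is bookkeeping the signs correctly: a single exchange $B_0\to B_0\setminus\{i\}\cup\{j\}$ changes $\chi$ both through the explicit alternating factor $(-1)^{\,\text{position}}$ coming from where $j$ is inserted and through the re-orientation factor attached to $i$ and $j$. I would therefore set up, once and for all, a clean convention for the tuple order (using the total order on $E$ fixed just before the lemma) and verify that toggling a single element of $A$ flips the sign on precisely the bases adjacent along the corresponding tree edge, and on no others within $L(P)$. Showing that the forest structure prevents inconsistency — i.e. that walking two different tree-paths to the same vertex is impossible precisely because $P$ is acyclic — is what makes the construction go through, and is the step I would write out most carefully.
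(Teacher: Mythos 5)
Your proposal is correct, and it is essentially the argument the paper has in mind: the paper gives no proof of this lemma, deferring to \cite[Lemma 6]{Lenz}, and the content of that lemma is exactly your forest-switching construction. The heart of the matter is the observation you make in the second step: for an edge $\{i,j\}$ of $\mathcal{G}$, the relative sign $\chi'(B_0\setminus\{i\}\cup\{j\})\,\chi'(B_0)$ differs from $\chi(B_0\setminus\{i\}\cup\{j\})\,\chi(B_0)$ by $(-1)^{[i\in A]+[j\in A]}$, so one is solving a sign-switching problem on the edges of $P$, which always has a solution on a forest by rooting each component and propagating membership in $A$ outward. One remark: your first ``key step'' (invoking \eqref{cond:GP} and the positivity of $m$ to control the products $\chi(B_0)\chi(B_0\setminus\{i\}\cup\{j\})$) is not needed here. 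All the lemma requires is that $\chi(B)\neq 0$ for every vertex $B$ of $L(P)$, which holds automatically because such $B$ are bases of the underlying matroid; the relation \eqref{cond:GP} and the multiplicity function play no role in this lemma, only in \Cref{lemma:dist_1}. Also, the worry about the ``insertion position'' sign is harmless: the re-orientation factor $(-1)^{|A\cap\{x_1,\dots,x_r\}|}$ depends only on the underlying set, so working with sets (increasing tuples) throughout removes that bookkeeping entirely.
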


We denote de point-wise product of two function $\chi$ and $m$ with 
\[\chi m(\underline{b}) \defeq \chi(\underline{b})\cdot m(\underline{b}).\]
We prove in our setting the equivalent of \cite[Lemma 9]{Lenz}.

\begin{lemma}\label{lemma:dist_1}
Let $(E,\rk, m)$ be an arithmetic matroid with basis graph $\mathcal{BG}$, $B_0$ be a vertex of $\mathcal{BG}$ and $P$ be a coordinatizing path in the graph $\mathcal{G}$, such that $L(\mathcal{G})=\mathcal{BG}_1$.
Let $\chi$ and $\chi'$ be two orientations of the arithmetic matroid $(E,\rk,m)$ such that $\chi(B)=\chi(B_0)$ and $\chi'(B)=\chi'(B_0)$ for all vertexes $B \in L(P)$.
If $\chi(B_0)=\chi'(B_0)$, then $\chi(B')=\chi'(B')$ for all $B' \in \mathcal{BG}_{\leq 1}$.
\end{lemma}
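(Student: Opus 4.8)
The plan is to compare the two GP-functions $\chi m$ and $\chi' m$ (recall that the pointwise product $\chi m$ is a GP-function for any orientation), which share the \emph{same} positive multiplicity function $m$. The hypotheses give $\chi(B)=\chi(B_0)=\chi'(B_0)=\chi'(B)$ for every $B\in L(P)$, so $\chi$ and $\chi'$ already agree on $B_0$ and on all tree-neighbours; the task is to extend this agreement to every vertex of $\mathcal{BG}_{\le 1}$, i.e. to every edge of $\mathcal{G}$. First I would record the local tool: specializing \eqref{cond:GP} to a list $\underline{x}$ and a list $\underline{y}$ sharing a common $(r-2)$-subset $S\subseteq B_0$ produces the three-term Grassmann--Pl\"ucker relation. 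Taking $S=B_0\setminus\{a_1,a_2\}$ with active elements $a_1,a_2\in B_0$ and $p_1,p_2\in E\setminus B_0$, this relation reads, up to fixed signs coming from reordering,
\[ \chi m(B_0)\,\chi m(D)\;=\;\chi m(N_{21})\,\chi m(N_{12})\;-\;\chi m(N_{22})\,\chi m(N_{11}), \]
where $N_{11},N_{12},N_{21},N_{22}$ are the four neighbours $B_0-a_i+p_j$ of the square and $D=B_0-a_1-a_2+p_1+p_2$ is the unique basis at distance two that occurs.

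The key observation is that, once $m$ is fixed, this relation rigidly constrains the signs. Writing it as $s_0M_0+s_1M_1+s_2M_2=0$ with positive magnitudes $M_0=m(B_0)m(D)$, $M_1=m(N_{21})m(N_{12})$, $M_2=m(N_{22})m(N_{11})$ and with $s_i$ the corresponding products of values of $\chi$, I would let $s_1$ be the fully known term (both of $N_{21},N_{12}$ already determined) and treat $s_0$ (which hides the unknown $\chi(D)$) and $s_2$ (which hides the target $\chi(N_{11})$, its partner $N_{22}$ being known) as the unknowns. Among the four sign-combinations of $(s_0,s_2)$ the value $s_0M_0+s_2M_2$ is attained twice only when it equals $0$; hence any nonzero target value $-s_1M_1$ is realised by a \emph{unique} $(s_0,s_2)$. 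Since both orientations satisfy the same equation with the same $s_1$, the same $M_i$, and $\chi(B_0)=\chi'(B_0)$, uniqueness forces them to agree on the target neighbour $N_{11}$ (and, as a by-product, on the diagonal $D$, which dissolves the apparent distance-two obstruction). When $D$ is not a basis the term $s_0M_0$ simply vanishes and the conclusion follows from $M_1=M_2$, $s_2=-s_1$.

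It then remains to fix an order in which the neighbours get resolved. Starting from $L(P)$, I would repeatedly apply the local step to a square whose remaining three neighbours are already known, using that $\mathcal{BG}_1=L(\mathcal{G})$ and that each non-tree edge of $\mathcal{G}$ is joined to $P$ by its fundamental cycle. I expect the combinatorial propagation to be the main obstacle: a single square suffices only when the fundamental cycle carries a suitable chord, whereas for a long induced fundamental cycle one must instead invoke the relation \eqref{cond:GP} attached to that cycle, in which every bracket other than the cycle-neighbours corresponds to a \emph{non-basis} and therefore drops out, again reducing the determination of the unknown neighbour to the sign-rigidity above. Verifying that the coordinatizing path $P$ guarantees the propagation reaches every edge of $\mathcal{G}$---equivalently, that agreement spreads from $L(P)$ to all of $\mathcal{BG}_{\le 1}$---is the delicate point, and is exactly where the choice of $P$ together with \Cref{lemma:positivity_L(P)} is used.
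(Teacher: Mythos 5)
Your local ``sign-rigidity'' observation is sound (modulo the care needed when some of the brackets vanish: whether $D$, $N_{11}$, etc.\ are bases is dictated by the underlying matroid, hence is the same for $\chi$ and $\chi'$, so the zero pattern of the $s_i$ is shared and the uniqueness of the sign combination realising a nonzero target does go through). The genuine gap is exactly where you flag it: the propagation from $L(P)$ to all of $\mathcal{BG}_{\le 1}$. A single square handles only the case where the two endpoints of the undetermined edge of $\mathcal{G}$ are joined in the already-determined subgraph by a path of length $3$; for a longer chordless fundamental cycle your fallback --- ``invoke the relation \eqref{cond:GP} attached to that cycle'' --- is not an argument, because \eqref{cond:GP} is a fixed $(r+1)$-term identity in which each bracket differs from $\underline{x}$ in a single slot; it has no form ``attached to a cycle'' and cannot by itself relate the product of brackets around a long cycle. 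You never supply the identity that would do this, so the proof does not close.

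The paper closes it with two ingredients you are missing. First, a double minimality argument: among the edges $(i,j)$ of $\mathcal{G}$ on which agreement is not yet known, pick one, $(h,k)$, whose endpoints are at minimal distance $l(h,k)$ in the already-determined subgraph $\mathcal{H}\supseteq P$, together with a shortest connecting path $Q$; minimality forces the even cycle $Q\cup\{(h,k)\}$ to be \emph{induced} in $\mathcal{G}$. Second, the generalized Leibniz formula of \Cref{thm:Leibniz_formula} (a derived consequence of \eqref{cond:GP}, not \eqref{cond:GP} itself), applied with $a_i,b_j$ chosen along this cycle: since the cycle is induced, only the two perfect matchings of an even cycle give nonvanishing products, so the whole sum collapses to $ax+b=c$ with a single unknown $x=\chi m(B_0\setminus\{h\}\cup\{k\})$, and comparing with the primed equation forces $x=x'$, contradicting the choice of $(h,k)$. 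Without the minimality step (to kill the chords) and without \Cref{thm:Leibniz_formula} (to encode the long cycle), your propagation stalls precisely at the ``delicate point'' you identify, so as written the proposal proves the lemma only for matroids in which every fundamental cycle of $\mathcal{G}$ relative to $P$ has length $4$.
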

\begin{proof}
Consider the subgraph $\mathcal{H}$ of $\mathcal{G}$ with the same set of vertexes and with an edge between $i \in B_0$ and $j \in E \setminus B_0$ if and only if $\chi(B_0 \setminus \{ i \} \cup \{ j\} )=\chi'(B_0 \setminus \{ i \} \cup \{ j\})\neq 0$.
The graph $\mathcal{H}$ contains the chosen coordinatizing path $P$ by hypothesis.
Suppose that $\mathcal{H}\neq \mathcal{G}$ and let $T$ ($T\neq \emptyset$) be the set of edges of $\mathcal{G}$ not contained in $\mathcal{H}$.
For each $(i,j) \in T$ we can consider $l(i,j)$ the length of the minimal path in $\mathcal{H}$ connecting the vertexes $i$ and $j$.
Obviously, $l(i,j)$ is a odd number greater than $2$.
Let us fix $(h,k) \in T$ with $l(h,k)$ minimal among all $l(i,j)$ for $(i,j) \in T$ and a minimal path $Q=(h=i_0, j_0, i_1, \dots, i_t, j_t=k)$ in $\mathcal{H}$ between $(h,k)$ (the equality $2t+1=l(h,k)$ holds).
By minimality of $(h,k)$, two vertexes $i_a$ and $j_b$ are connected in $\mathcal{G}$ if and only if $a=b$, $a=b+1$ or $b=t$ and $a=0$.

Without loosing of generality, we suppose $i_v=v+1$ for $0\leq v \leq t$, $B_0=[r]$, and $j_v=r+v+1$ for $0\leq v \leq t$.
Apply \Cref{thm:Leibniz_formula} with $a_i=i$ and $b_j=t+j+2$ to the GP-functions $\chi m$ and $\chi' m$.
The product $\prod_{i=1}^r \chi m(a_1, \dots, b_{\sigma(i)}, \dots, a_r)$ is non zero if and only if $(a_i,b_{\sigma(i)})\in Q\cup \{(h,k)\}$ for all $i\leq t+1$ and $b_{\sigma(i)}=a_i$ for all $t+1 < i\leq r$. The same implication holds for the function $\chi' m$.
This happens only for two different permutations $\tau$ and $\eta$, say that $\tau(h)=k$ and $\tau(h)=j_0$.
We define 
\begin{align*}
x \defeq & \chi m(a_1, \dots, a_{h-1}, b_{k},a_{h+1}, \dots, a_r),\\
a \defeq & \prod_{i \neq h} \chi m(a_1, \dots, b_{\tau(i)}, \dots, a_r),\\
b \defeq & \prod_{i=1}^r \chi m(a_1, \dots, b_{\mu(i)}, \dots, a_r),\\
c \defeq & \chi m(a_1, \dots, a_r)^{r-1} \chi m(b_1, \dots, b_r).
\end{align*}
Thus, eq.~\eqref{eq:Leibniz_formula} can be reduced to $ ax+b=c$.
The equivalent relation for $\chi'$ is $ax'+b=c'$ with $x'=\pm x$ and $c'=\pm c$.
Since $a,b,c$ and $x$ are non-zero, then $x=x'$ and so 
\[\chi(a_1, \dots, a_{h-1}, b_{k},a_{h+1}, \dots, a_r)=\chi'(a_1, \dots, a_{h-1}, b_{k},a_{h+1}, \dots, a_r).\]
This equality contradicts the supposition $\mathcal{H}\neq \mathcal{G}$.
\end{proof}

\begin{lemma} \label{lemma:induction_basis_graph}
Let $(E,\rk,m)$ be an arithmetic matroid and $\chi$ and $\chi'$ two orientations of the arithmetic matroid $(E,\rk)$ that coincide on the elements of $\mathcal{BG}_{\leq 1}$.
Then $\chi = \chi'$.
\end{lemma}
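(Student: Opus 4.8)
The plan is to reduce the statement directly to \Cref{lemma:BG1_implies_BG}, applied to the two GP-functions $f = \chi m$ and $g = \chi' m$. Recall that $\chi m$ and $\chi' m$ are GP-functions, since this is exactly the content of the first example in the GP-function section. Fix the distinguished basis $B_0 = (b_1, \dots, b_r) \in E^r$, written as an increasing tuple, around which $\mathcal{BG}_{\leq 1}$ is built. It then suffices to verify the two hypotheses of \Cref{lemma:BG1_implies_BG} with $B = B_0$: that $f$ and $g$ agree and are nonzero at $B_0$, and that they agree at every tuple $C$ differing from $B_0$ in a single coordinate.

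For the nondegeneracy, note that $B_0 \in \mathcal{BG}_{\leq 1}$, so by hypothesis $\chi(B_0) = \chi'(B_0)$; since $B_0$ is a basis we have $m(B_0) > 0$ and $\chi(B_0) = \pm 1$, whence $f(B_0) = g(B_0) \neq 0$.

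For the coincidence on coordinate-neighbours, I would argue by cases on a tuple $C = (b_1, \dots, b_{i-1}, c, b_{i+1}, \dots, b_r)$ with $c \neq b_i$. If $c \in B_0 \setminus \{b_i\}$, then $C$ has a repeated entry, so both $\chi$ and $\chi'$ vanish on $C$ by the alternating property, giving $f(C) = 0 = g(C)$. If $c \notin B_0$ and $B_0 \setminus \{b_i\} \cup \{c\}$ is a basis, then this basis lies at distance one from $B_0$ and hence belongs to $\mathcal{BG}_{\leq 1}$; the hypothesis then gives $\chi(C) = \chi'(C)$, so $f(C) = g(C)$. Finally, if $B_0 \setminus \{b_i\} \cup \{c\}$ is not a basis, then both chirotopes vanish on $C$ by definition of the associated matroid. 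In every case $f(C) = g(C)$.

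\Cref{lemma:BG1_implies_BG} now yields $\chi m = \chi' m$ on all of $E^r$. On any basis $B$ we have $m(B) > 0$, so dividing out $m$ gives $\chi(B) = \chi'(B)$; on any non-basis both $\chi$ and $\chi'$ vanish. Hence $\chi = \chi'$. The only point requiring care is the bookkeeping in the third step: translating the set-theoretic ``distance one'' that defines $\mathcal{BG}_{\leq 1}$ into the coordinate-wise ``differs in exactly one entry'' demanded by \Cref{lemma:BG1_implies_BG}, and confirming that the degenerate repeated-element and non-basis cases are absorbed by the alternating and support properties of the chirotopes.
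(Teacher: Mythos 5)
Your proof is correct and follows exactly the paper's route: the paper's own proof is the one-liner "both $\chi m$ and $\chi' m$ are GP-functions, so by \Cref{lemma:BG1_implies_BG} they are equal," and your write-up simply supplies the case-checking that verifies the hypotheses of that lemma. Nothing to change.
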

\begin{proof}
By hypothesis both $\chi m$ and $\chi' m$ are GP-functions, so by \Cref{lemma:BG1_implies_BG} they are equal.
%\todo{semplificare usando \ref{thm:Leibniz_formula}}
%Suppose that $\chi \neq \chi'$, $B_0=[r]$, and consider the minimal -- with respect to lexicographical order -- basis $B=(b_1, \dots , b_r)$ such that $\chi(B) \neq \chi'(B)$.
%By the basis exchange property \eqref{eq:exchange_property} there exist $i \in [r]\setminus B$ and $b_j \in B \setminus [r]$ such that $B\setminus \{b_j\} \cup i$ is a basis different from $[r]$.
%Obviously $i\leq r < b_j$.
%Consider $\underline{x}=(1,\dots, i-1, i+1, \dots, r)$ and $\underline{y}=(i,b_1, \dots ,b_r)$.
%Notice that for all $k \neq j$ the equality
%\[\chi(b_k, 1, \dots , i-1, i+1, \dots, r)=\chi'(b_k, 1, \dots , i-1, i+1, \dots, r)\]
%holds by hypothesis and the equality
%\[ \chi(i, b_1, \dots, b_{k-1},b_{k+1}, \dots, b_r) = \chi'(i, b_1, \dots, b_{k-1},b_{k+1}, \dots, b_r)\]
%holds for $b_k>i$, since $\{ i, b_1, \dots, b_{k-1},b_{k+1}, \dots, b_r\}$ is %lexicographically
%less than $B$.
%If $b_k<i$ then 
%\[ \chi(b_k, 1, \dots , i-1, i+1, \dots, r)=0=\chi'(b_k, 1, \dots , i-1, i+1, \dots, r).\]
%Therefore, we have 
%\[\chi(\underline{x}_k)m(\underline{x}_k)\chi(\underline{y}^k)m(\underline{y}^k) = \chi'(\underline{x}_k)m(\underline{x}_k) \chi'(\underline{y}^k) m(\underline{y}^k) \]
%for all $k \neq 0$.
%Notice that $[r]=\underline{x}_0$, $\chi([r])=\chi'([r]) \neq 0$, and $B=\underline{y}^0$.
%Comparing the relations \eqref{cond:GP} for the two oriented arithmetic matroid we obtain a contradiction $\chi(B) = \chi'(B)$.
\end{proof}
\Cref{thm:uniqueness_orientation} follows from \Cref{lemma:positivity_L(P),lemma:dist_1,lemma:induction_basis_graph}.

\section{An example}
We show an example of orientable arithmetic matroid that is not representable.
\begin{example}
Let $([3], \rk, m)$ be the orientable arithmetic matroid associated with the matrix $\left( \begin{smallmatrix}
1 & 1 & 2\\
0 & n & n
\end{smallmatrix} \right) $.
Let $m'$ be the multiplicity function defined by $m'([3])=1$ and $m'(A)=m(A)$ for all $A\subsetneq [3]$.
The triple $([3], \rk, m')$ is a non-representable arithmetic matroid, since the multiplicity function does not have the GCD property.
This matroid is orientable, indeed any orientation $\chi$ of $([3],\rk, m)$ is an orientation of $([3],\rk, m')$.
\Cref{fig:pseudo_arr} represents an arrangement of hypersurfaces of $T^2$, the compact two dimensional torus, whose pattern of intersections coincides with the arithmetic matroid $([3],\rk,m')$ for $n=3$.
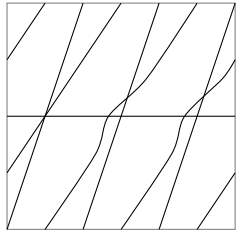
\begin{figure}
\centering
\begin{tikzpicture}[scale=0.5]
		\draw [gray] (0, 0) rectangle (6,6);
		\draw (0,3) -- (6,3);
		\draw (0,0) -- (2,6);
		\draw (2,0) -- (4,6);
		\draw (4,0) -- (6,6);
		\draw (0,1.5) -- (3,6);
		\draw (0,4.5) -- (1,6);
		\draw (5,0) -- (6,1.5);
		\draw plot [smooth] coordinates {(3,0) (4.33,2) (4.67,3) (5.67,4) (6,4.5)};
		\draw plot [smooth] coordinates {(1,0) (2.33,2) (2.67,3) (3.67,4) (5,6)};
\end{tikzpicture}
\caption{An arrangement of hypersurfaces in the compact torus.} \label{fig:pseudo_arr}
\end{figure}
\end{example}
\section{Representability}

\begin{definition}\label{def:strongGCD}
An arithmetic matroid $(E,\rk,m)$ has the \textit{strong GCD property} if 
\[ m(A)= \gcd \{ m(B) \mid B \textnormal{ basis and } |B\cap A|= \rk A\}\]
for all $A\subseteq E$.
\end{definition}
\noindent
Notice that arithmetic matroids with the strong GCD property are uniquely determine by their values on the basis of the underlying matroid.
The strong GCD property is equivalent to both $(E,\rk,m)$ and $(E,\rk^*,m^*)$ are GCD arithmetic matroids.

\begin{prop}\label{prop:repr_matroid}
Let $(E,\rk,m)$ be an orientable arithmetic matroid. Then the underlying matroid $(E,\rk)$ is representable over $\Q$.
\end{prop}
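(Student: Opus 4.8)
The plan is to exhibit an explicit rational representation of the underlying matroid $(E,\rk)$ by building it directly out of the GP-function $\chi m$. The key observation is that, by \Cref{def:oam}, the pointwise product $\chi m:E^r \to \Q$ is a GP-function, and by the remark following the main discussion the determinant is the prototypical example of such a function. The strategy is therefore to \emph{realize} $\chi m$ as a determinant: I would construct a map $i:E \to \Q^r$ such that, for every $r$-tuple $\underline{b}=(b_1,\dots,b_r)$ of elements of $E$, one has
\[
\det\bigl(i(b_1),\dots,i(b_r)\bigr) = \chi(\underline{b})\, m(\underline{b}).
\]
Since $\chi(\underline{b})m(\underline{b})\neq 0$ exactly when $\{b_1,\dots,b_r\}$ is a basis of $(E,\rk)$, such a map would immediately show that the columns $\{i(e)\}_{e\in E}$ represent the matroid $(E,\rk)$ over $\Q$, proving the proposition.

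First I would use the chosen basis $[r]$ (as fixed before \Cref{thm:uniqueness_orientation}) as a coordinate frame: after a re-orientation and rescaling I can normalize so that $\chi m([r])=1$, and then set $i(j)=e_j$ (the $j$-th standard basis vector) for $j\in[r]$. For an element $e\in E\setminus[r]$, the natural candidate is to define the $k$-th coordinate of $i(e)$ to be the value of $\chi m$ on the tuple obtained from $[r]$ by replacing its $k$-th entry with $e$, up to the appropriate alternating sign; this is exactly the Cramer/cofactor expansion one expects from the determinantal interpretation. With these definitions the identity $\det = \chi m$ holds by construction on $[r]$ itself and on every tuple differing from $[r]$ in a single entry (the ``distance-one'' tuples in the basis-graph language of \Cref{sect:uniq_orientation}).

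The main step, and the place I expect the real work to be, is upgrading this agreement from distance-one tuples to \emph{all} $r$-tuples. Here I would invoke the machinery already developed: both $\det(i(\cdot))$ and $\chi m$ are GP-functions on $E^r$ that agree on $[r]$ (where the common value is nonzero) and on all tuples at distance one from $[r]$. This is precisely the hypothesis of \Cref{lemma:BG1_implies_BG}, whose conclusion forces the two GP-functions to coincide everywhere. Thus $\det(i(b_1),\dots,i(b_r))=\chi(\underline{b})m(\underline{b})$ for every tuple, and in particular the set of bases of the column arrangement $\{i(e)\}$ equals $\mathcal{B}$.

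The anticipated obstacle is verifying the two technical hypotheses of \Cref{lemma:BG1_implies_BG} cleanly, namely that $\det(i(\cdot))$ is genuinely a GP-function (which is automatic, since the determinant is a GP-function by the discussion preceding \Cref{thm:Leibniz_formula}) and that it really agrees with $\chi m$ on all distance-one tuples. The latter reduces to checking the cofactor identity for the specific coordinate assignment, which is a direct Laplace-expansion computation along the column indexed by the swapped element; because the matrix $i$ differs from the identity only in a single column for distance-one tuples, this computation is routine. Once these are in place, \Cref{lemma:BG1_implies_BG} does the rest and representability over $\Q$ follows.
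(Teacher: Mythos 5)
Your proposal is correct and follows essentially the same route as the paper: both construct the candidate vectors from the values of $\chi m$ on the tuples at distance one from the fixed basis $[r]$ (Cramer-style coordinates), check agreement of the two GP-functions $\det$ and (a scalar multiple of) $\chi m$ on $\mathcal{BG}_{\leq 1}$, and then conclude by \Cref{lemma:BG1_implies_BG}. The only cosmetic difference is that you normalize $\chi m([r])=1$, while the paper keeps the factor $\chi m(B_0)^{r-1}$ explicit; either way only the vanishing pattern of the determinants matters.
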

\begin{proof}
We choose an orientation $\chi$ of the arithmetic matroid $(E,\rk,m)$ and a basis $B_0=(b_1, \dots b_r)$ of the matroid.
For each $e \in E$, consider in $\Q^r$ the vector 
\[v_e \defeq (\chi m(b_1, \dots, b_{i-1}, e, b_{i+1}, \dots, b_r) %m(B\setminus \{b_i\} \cup \{e\})
)_{1\leq i \leq r}.\]
We choose a total order on $E= [n]$ such that $B_0=[r]$.
Let $N$ be the matrix that represent the vectors $v_i$, for $i=1,\dots, n$, in the canonical basis of $\Q^r$.
We claim that, for each $A\subseteq [n]$ of cardinality $r$, the functions $\det N[A]$ and $\chi m(B_0)^{r-1}\chi m (A)$ coincide.
%We call $\Lambda$ the additive subgroup generated by $v_e$, $e\in E$.
%Choose a basis $\{w_i\}_{1 \leq i \leq r}$ of the lattice $\Lambda$.
%We use the above notation $\Gamma_A$ for the subgroup of $\Lambda$ generated by $v_e$, $e \in A$.
The claimed equality holds if $A=B_0$. 
If $A=\{1,\dots, i-1,i+1, \dots, r,j\}$, then 
\begin{align*}
\det N[A]&=(-1)^{r-i}\frac{\chi m(1,\dots, i-1,j,i+1,\dots,r)}{\chi m([r])}\det N[[r]]\\
&=\frac{\chi m(A)}{\chi m(B_0)} \chi m (B_0)^r =\chi m(B_0)^{r-1}\chi m (A)
\end{align*}

The GP-function $\chi m(B_0)^{r-1} \chi m(\cdot)$ and $\det N[\cdot]$ coincide on $\mathcal{BG}_{\leq 1}$, thus by \Cref{lemma:BG1_implies_BG} $\chi m(B_0)^{r-1} \chi m(B)=\det N[B]$ for all $B\subset E$, $|B|=r$.
The matroid defined by $N$ is $(E,\rk)$ since they have the same set of basis.
%\todo{semplificare usando thm \ref{thm:Leibniz_formula}}
%Let now $C=(c_1,\dots,c_r)$ be the minimal subset of $[n]$ in lexicographical order such that $\det N[C] \neq \chi m(C)$.
%Let $i$ be the minimal element of $[r] \setminus C$.  
%Consider the equation \eqref{cond:GP} and the equation
%\[\det N[[r]] \det N[C]= \sum_{i=1}^r (-1)^{j-1} \det N[\underline{x}_j] \det N[\underline{y}^j] \]
%where $\underline{x}_j=(c_j, 1, \dots, i-1, i+1 \dots, r)$ and $\underline{y}^j=(i,c_1, \dots, c_{j-1},c_{j+1}, \dots, c_{r})$.
%If $c_j < i$, then both $\det N[\underline{x}_j]$ and $\chi m(\underline{x}_j)$ are zero.
%If $c_j>i$, then $\underline{y}^j$ is less than $C$, thus $\det N[\underline{y}^j]=\chi m(\underline{y}^j)$.
%It follows that
%\[\det N[[r]] \det N[C]=\chi m([r])\chi m(C). \]
%Since $\det M[[r]]=\chi m([r])\neq 0$, we obtain the contradiction $\det N[C]=\chi m(C)$.

%Notice that $\{v_e\}_{e \in E}$ in $\Q^r$ is a representation of the matroid $(E,\rk)$.
\end{proof}

\begin{prop}\label{prop:strongGCD+orient_realiz}
Let $(E,\rk,m)$ be an orientable arithmetic matroid with the strong GCD property. Then $(E,\rk,m)$ is representable.
\end{prop}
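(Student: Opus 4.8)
The plan is to combine the structural result just proved, \Cref{prop:repr_matroid}, with the strong GCD hypothesis to recover the multiplicity function from the representing matrix. By \Cref{prop:repr_matroid} we already know the underlying matroid $(E,\rk)$ is representable over $\Q$; in fact the proof constructs an explicit integer-valued matrix $N$ (its entries are values of $\chi m$, hence integers) whose maximal minors satisfy
\[ \det N[B]=\chi m(B_0)^{r-1}\chi m(B) \]
for every basis $B$. So $N$ realizes the matroid and, up to the global nonzero constant $\chi m(B_0)^{r-1}$, its maximal minors compute $\chi m$ on bases, hence $|\det N[B]|$ computes $m(B)$ on bases. The goal is to upgrade $N$ to a genuine representation of the \emph{arithmetic} matroid, i.e. a matrix whose column lattice reproduces $m(A)$ for \emph{all} $A$, not just bases.

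First I would recall that an integer matrix $N$ represents the arithmetic matroid $(E,\rk,m)$ precisely when, for every $A\subseteq E$, the multiplicity $m(A)$ equals the gcd of the maximal minors of the submatrix $N[A]$ of columns indexed by $A$ (equivalently, the index of the lattice spanned by those columns in its saturation). The key algebraic input is the Cauchy--Binet-type identity: the gcd of the $r\times r$ minors using columns from $A$, when $A$ has full rank, is $\gcd\{\det N[B]\mid B\subseteq A,\ |B|=\rk A,\ B\text{ a basis}\}$. More generally, when $\rk A=\rho<r$, the relevant invariant is the gcd of the $\rho\times\rho$ minors of $N[A]$, and this equals the gcd over bases $B$ of the full matroid meeting $A$ in a spanning subset.

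Second, I would invoke the strong GCD property directly. \Cref{def:strongGCD} says
\[ m(A)=\gcd\{m(B)\mid B\text{ basis and }|B\cap A|=\rk A\}, \]
which is exactly the combinatorial shadow of the gcd-of-minors computation above. Because I have arranged $|\det N[B]|=m(B)$ on every basis $B$ (the global constant $\chi m(B_0)^{r-1}$ is a unit issue I address below), the strong GCD formula for $m(A)$ matches the gcd-of-minors formula for the lattice invariant of $N[A]$. Thus $N$ (after normalization) represents $(E,\rk,m)$ on \emph{all} subsets, not merely on bases, and by the remark following \Cref{def:strongGCD} that a strong-GCD arithmetic matroid is determined by its values on bases, this suffices.

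The main obstacle is the global scalar $\chi m(B_0)^{r-1}$: the matrix $N$ realizes $\chi m$ scaled by this integer, so $|\det N[B]|=\chi m(B_0)^{r-1}m(B)$ rather than $m(B)$, and gcds of minors inherit the same spurious factor. The clean fix is to replace $N$ by a matrix with the correct minors: concretely, one chooses $B_0=[r]$ with $m(B_0)=1$ if possible, or divides out the common factor using the strong GCD property (the factor $\chi m(B_0)^{r-1}=m(B_0)^{r-1}$ divides every minor, so the gcd structure is preserved and only the absolute scale changes). I would handle this by normalizing the representation so that $\det N[B_0]=m(B_0)$; since all minors carry the factor $m(B_0)^{r-1}$, the ratios of minors — which is what the gcd invariant measures for the \emph{saturated} lattice — are unaffected, and one extracts a representing matrix by a unimodular change recording the correct lattice. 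Verifying that this normalization genuinely yields $m(A)$ for non-full-rank $A$, via the $\rho\times\rho$ minor computation, is the one place requiring care, but it reduces to the Cauchy--Binet argument together with the strong GCD identity.
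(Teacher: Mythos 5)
Your overall route is the same as the paper's---reuse the matrix $N$ from \Cref{prop:repr_matroid}, match the multiplicity on bases, and let the strong GCD property propagate the equality to all subsets---but the two places you flag as ``requiring care'' are exactly where your version breaks, and the paper resolves them differently. First, the normalization. No scalar rescaling of an integer matrix removes the factor $m(B_0)^{r-1}$ from all $r\times r$ minors, and the ``unimodular change recording the correct lattice'' you gesture at is really a change of ambient lattice: the paper replaces $\Z^r$ by the lattice $\Lambda$ generated by the vectors $v_e$ themselves. Since $[\Z^r:\Lambda]$ equals the gcd of the maximal minors, which by strong GCD is $m(B_0)^{r-1}m(\emptyset)$, this rescales the basis minors to $m(B)/m(\emptyset)$, \emph{not} to $m(B)$. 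The paper therefore adjoins a finite abelian group $G$ of order $m(\emptyset)=m(E)$ and represents the matroid by the elements $(v_e,0)$ in $\Lambda\times G$. Your proposal has no torsion summand at all, and a representation inside a free lattice forces $m'(\emptyset)=1$; so whenever $m(\emptyset)>1$ (which the strong GCD property certainly permits) your construction cannot reproduce $m$.

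Second, the ``Cauchy--Binet-type identity'' you invoke for sets $A$ with $\rk A=\rho<r$ --- that the gcd of the $\rho\times\rho$ minors of $N[A]$ equals $\gcd\{\det N[B]\mid B \text{ basis},\ |B\cap A|=\rho\}$ --- is not a standard identity and is false for general integer matrices: for $v_1=2e_1$, $v_2=2e_2$ in $\Z^2$ and $A=\{1\}$ the left side is $2$ while the right side is $4$. That identity is precisely the assertion that the representable arithmetic matroid defined by $N$ has the strong GCD property, i.e.\ it is the statement you are trying to prove, not an input you may assume. The argument has to be organized the other way around, as in the paper: the representation in $\Lambda\times G$ yields some arithmetic matroid $m'$ agreeing with $m$ on bases, and one then argues that both $m$ and $m'$ are determined by their values on bases (for $m$ this is the hypothesis plus the remark after \Cref{def:strongGCD}; for $m'$ it still requires a verification that your write-up, like the paper's, leaves implicit, but it cannot be waved through as Cauchy--Binet).
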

\begin{proof}
Consider a orientation of $(E,\rk,m)$, the vectors $v_e \in \Q^r$ for $e \in E$ defined in the proof of \Cref{prop:repr_matroid}, and let $\Lambda$ the lattice generated by $\{v_e\}_{e\in E}$.
Let $G$ be a finite abelian group of cardinality $m(\emptyset)=m(E)$.
We claim that the elements $(v_e,0)$ in $\Lambda \times G$ are a representation of the arithmetic matroid $(E,\rk,m)$.
Observe that the index $[\Z^r: \Lambda]$ is equal to $m(B_0)^{r-1} m(E)$.
Let $(E,\rk,m')$ be the arithmetic matroid described by the vectors $(v_e,0)$.
The multiplicity functions $m$ and $m'$ coincides on all basis of the matroid $(E,rk)$, hence by the GCD property $m=m'$.
\end{proof}

\bibliographystyle{alpha}
\bibliography{Orient_arith_matroids}

\end{document}